\newtheorem{theorem}{Theorem}[section]
\newtheorem{lemma}[theorem]{Lemma}
\newtheorem{proposition}[theorem]{Proposition}
\theoremstyle{definition}
\newtheorem{definition}[theorem]{Definition}
\theoremstyle{remark}
\newtheorem{remark}[theorem]{Remark}
\numberwithin{equation}{section}
\begin{document}

 \title[On Generalized Weil Representations over Involutive Rings]{On Generalized Weil Representations over Involutive Rings}


\author{Luis  Guti\'errez}
\address{Instituto de Matem\'aticas, Universidad Austral de Chile, Campus Isla Teja s/n, Valdivia, Chile.}
\curraddr{}
\email{luis.gutierrezfrez@docentes.uach.cl} 
\thanks{}

\author{Jos\'e Pantoja}
\address{Instituto de Matem\'aticas, Pontificia Universidad Cat\'olica de Valpara\'iso, Blanco Viel 596, Cerro Bar\'on, Valpara\' iso, Chile.}
\curraddr{}
\email{jpantoja@ucv.cl}
\thanks{  } 

\author{Jorge Soto-Andrade}
\address{Departamento of Matem\'aticas, Facultad de Ciencias, Universidad de  Chile, Casilla 653, Santiago, Chile.}
\curraddr{}
\email{sotoandrade@u.uchile.cl}
 
\thanks{The  authors were supported by Fondecyt Grant 1095078.}

\subjclass[2010]{Primary 20C33; Secondary 20H25} 

\date{}
 

\keywords{Weil representation, generalized classical groups, involutive analogues of classical groups, Bruhat presentation, matrix groups over involutive rings}
\date{}

\begin{abstract}
We  construct via generators and relations, generalized Weil representations  for   analogues of   classical $SL(2,k),  \;\; k$ a field, over involutive base rings $(A, \ast).$     This family of   groups covers different kinds of groups, classical and non classical.  We give some examples that include symplectic groups as well as        non classical groups like    $SL_\ast(2,A_m), $  where $A_m$ is the finite modular analogue of the algebra of real m-jets in one dimension with its canonical involutive symmetry. 

\end{abstract}

\maketitle

 
\section{Introduction}

The classification and construction of all unitary representations of a given group is a fundamental problem in   representation theory.  Especially interesting is the case of classical groups over a local field (particularly Lie groups) or over a finite field. It turns out sometimes that, remarkably enough, we can transpose methods of construction from Lie groups to finite groups of Lie type or vice versa. 
 
So we may try to use  finite groups of Lie type as a testing ground for designing    methods of construction that could be transposed to the case of Lie groups. In principle, we may expect to be more feasible to find                                                                                                                                                                                                                                                                                                                                                                                                                                             for the former,  methods that are elementary, uniform  and universal in the sense that they can be carried over to  the latter.
  
  This is however  a hard task and one of the few finite groups of Lie type   for which this has been achieved up to now,  is   $G = SL(2, \mathbb F_q) $, all of whose irreducible representations may be constructed elementary and uniformly by by constructing  a remarkable representation of $G$  
associated to each semi-simple two-dimensional (Galois) algebra over the finite field 
$\mathbb F_q$
(namely  $\mathbb F_q \times\mathbb F_q
$ and   $ \mathbb F_{q^2}, $ up to isomorphism), which gives by decomposition according to the
orthogonal group of the  corresponding Galois norm, the principal and the cuspidal series of irreducible 
representations of  $G,$  respectively \cite{sa1}. 

We recall that these representations are called nowadays Weil (or Shale-Weil) representations of $G$, because they arose from Weil's classical construction \cite {weil}, for any local field,  of the ``oscillator representation''  arising in the work of Shale \cite{shale} on linear symmetries of free boson fields in quantum field theory. Interestingly enough, this construction arising in quantum physics enabled us in the sixties to solve the fundamental problem for $SL(2, \mathbb F_q) $,   had remained open since the construction of its character table at the turn of the century. 

Originally  Shale  constructed   the oscillator  representation of  $Sp(2n,k)$ , for  $k = \mathbb R$, which turns out to be a projective representation,    taking advantage of the representation theory of the Heisenberg group $H_n$ in  $n$ degrees of freedom, as described by the Stone von Neumann Theorem, that says that  $ H_n$ has just one irreducible unitary representation of dimension greater than $1$ with a given central character (the famous  Schr\"odinger representation).  Weil \cite{weil} extended later this construction to the local field case (for a very readable and more geometric account, see \cite{lv}). 
Shortly thereafter,  Cartier noticed that Weil representations associated to any non-degenerate quadratic form $Q$ over the base field, could be constructed for  $ SL(2,k)$ in an elementary way, via generators and relations, using the well known presentation of this group, which follows from its Bruhat decomposition. In this way the Weil representation appears as a functorial construction on the category of quadratic spaces.   The projective oscillator representation corresponds then to the rank one quadratic form  $x^2$ over $k$ and we get in fact (true) Weil representations  in the even rank case \cite{sa1}.  Moreover, under this approach the intertwining of the Weil representation $W_Q$ associated to the quadratic form  $Q$ is ``explained" by the natural action of the orthogonal group  $O(Q) $ in the space of  $W_Q$.
Cartier conjectured further  that this construction would provide a uniform and elementary method for constructing all irreducible representations of $SL(2,k,)$ and for  $ Sp(2n,k)$ as well, although the presentation known at the time for  $ Sp(2n,k)$ (found by Dickson at the turn of the century) was rather unyielding.  
A more convenient presentation based on the Bruhat decomposition,  was found however in \cite{sa1} by regarding $Sp(2n,k) $ as an   $``SL(2)"$ group over the involutive  ring $A = M(n,k)$ (with transpose as involution).  An elementary construction of these Weil representations for the symplectic groups became then possible, via generators and relations. 

This suggests to try to construct  Weil representations by generators and relations in a very general setting, for an analogue $G$ of  $ SL(2,k) $ over any involutive ring, for which a ``Bruhat" presentation analogue to the classical one holds.  The method of construction  would involve  defining suitable ``Weil operators" associated to the generators and checking  that the defining relations of the presentation are preserved.   

The central result of this paper is the construction of a very general Weil representation  from  abstract core data consisting of  a module  $M$ over an involutive ring  $(A,\ast)$ equipped with a suitable  non degenerate complex valued self pairing  $\chi$ and its  second order homogeneous companion $\gamma$.   
We recover as particular examples   of our  construction, the Weil representations  of   the  symplectic groups $Sp(2n,k)$ of \cite{sa1}  and   the generalized Weil representation  in \cite{lucho}, for  the non classical case  of an involutive base ring having a nilpotent radical.  

We also give below a a first general decomposition of this generalized Weil representation, based on the symmetry group of our data.   
 
We may remark that there is an intriguing similarity of our  involutive analogues and quantum analogues of classical  $S L(2,k)$, seen as matrix groups with non commuting entries, that could be ``explained" as follows.  
We are working in fact in a tamely non commutative case,   in which non commutativity  is ``controlled'' by an involution  $T: a\mapsto a^{\ast }$ 
in the coefficient ring  $A.$ and we require our entries to  $\ast-$commute.   Indeed letting    
  \ $m:A\otimes A\rightarrow A$   the multiplication
of \ $A$ \ and \ \ $S:A\otimes A\rightarrow A\otimes A$ \ \   the ``flip''
\ $x\otimes y\longmapsto y\otimes x,$ we see that \ $T$ \ ``transforms'' \ $m
$ $\ $into $\ m\circ S$ as follows: \ \ \ \ \ $m\circ S=T\circ m\circ (T\otimes T)^{-1},$
just as the $R-$matrix ''controls'' the
lack of co-commutativity in a quantum group, i. e. \ \ \ $S\circ \Delta = 
\mathcal{I}_{R}\circ \Delta ,$\ \ where $\Delta $ stands for the
comultiplication in the corresponding Hopf algebra and   \ $\mathcal{I}_{R}
$ \ denotes conjugation by the \ $R-$matrix, which is an invertible element
in    \ $A\otimes A.$\ 
Our 
 \ $\ast -$ determinant  $
ad^{\ast }-bc^{\ast },$ defined  only  for matrices
whose entries \ ``$\ast -$commute'' . follows,  in a way closely reminiscent of  the $q-$determinant.   
  
Regarding the existence of Bruhat presentations for involutive analogues of  $\ SL_{\ast }(2,A)$, we mention that a first step  was already accomplished in  \cite{PSAjalg}, where a {\em Bruhat decomposition} for these groups was obtained in the case of an artinian
involutive base ring $A$.  Later, the classical Bruhat presentation of $SL(2,k)$
was extended to the case of an artinian simple involutive $A$ in \cite{P}. Then
the existence of a Bruhat presentation for  $\ SL_{\ast }(2,A)$ was proved in \cite{PSAcom} for a wide class of involutive rings  $A$, namely, 
those admitting a weak  $\ast -$analogue of the euclidean algorithm for coprime elements,   that includes the artinian simple rings considered in  \cite{P} as well as rings as $\mathbb Z$.  
We notice that in this last paper, $\varepsilon-$analogues,    ($ \epsilon = \pm 1$)    besides  $\ast-$analogues, were considered for  $SL(2,k)$, with  $\varepsilon = 1$ corresponding to the previous case, so that now we recover split orthogonal groups as well as symplectic groups.

This article is organized as follows.

In section 2 we recall the definition and main properties of the $\ast$ and $\varepsilon$ analogues of classical  $SL(2,k)$.

In section 3, we recall Bruhat decompositions for our groups   $SL_{\ast}^{\varepsilon}(2,A)$.

In section 4, we construct generalized Weil representations for   $SL_{\ast}^{\varepsilon}(2,A)$.

In section 5, we show, for the case of symplectic groups, how to recover the Weil representations constructed in \cite{sa1} and we construct also Weil representations for the case of an odd rank quadratic form, not considered there.

In section 6, we show how to recover the Weil representation constructed in the non classical case of an involutive ring with non trivial nilpotent Jacobson radical in \cite{lucho}.

 Finally, in section 7, we give a first decomposition of our generalized Weil representations, in terms of the irreducible representations of the defining data  $\chi$ and  $\gamma$.

\section{The groups  $SL_{\ast}^\varepsilon(2,A)$}
\label{recall}
We recall in this section the definition and main properties of the generalized classical 
groups $SL_{\ast}^{\varepsilon}(2,A)$, where   $(A,\ast)$ is a unitary   ring  with involution $\ast$ and   $\varepsilon=\pm1 \in A$.  We look upon these groups as  $\ast$  and $\varepsilon$-analogues of  the groups  $SL(2,k)$, $k$ a field.  For more details
see \cite{PSAjalg}.

\begin{definition}

For a ring $A$ with involution  $\ast$, the group $SL_{\ast}^{\varepsilon}(2,A)$ $\ $ may be defined as the set  of all   $2 \times 2$ matrices 
$  
\left(
\begin{array}
[c]{cc}
a & b\\
c & d
\end{array} 
\right)  $ with    $ a,b,c, d \in A$ such that 

\begin{enumerate}    
\item $ab^{\ast}=-\varepsilon ba^{\ast},$
\item $cd^{\ast}= -\varepsilon dc^{\ast},$
\item $a^{\ast}c=-\varepsilon c^{\ast}a,$
\item $b^{\ast}d=-\varepsilon d^{\ast}b,$
\item $ad^{\ast}+\varepsilon bc^{\ast}=a^{\ast}d+\varepsilon c^{\ast}b=1 $
\end{enumerate} 
  with   matrix multiplication.

\end{definition}

We notice that this group may also be described as the unitary group $ U(H_\varepsilon) $ of the  $\varepsilon-$Hermitian form 
$  H_ \varepsilon $ on  $ A^2 = A \times A$ associated to  the $ 2 \times 2 $ matrix
$J_{\varepsilon} =\left(
\begin{array}
[c]{cc}%
0 & 1\\
\varepsilon & 0
\end{array}
\right),$ in the following sense.

\begin{definition}

 A left $\varepsilon-$Hermitian form $H$  on  a left  $A$-module  $M$ is a function     \mbox {$H: M\times M \rightarrow
A$}  that is biadditive, left linear in the first variable and such that
$H(y,x)=\varepsilon H(x,y)^{\ast},$ for all  $x,y \in M.$

\end{definition}

Recall that when the $A-$module $M$ is free of finite rank $m$, we have a matrix description of left $\varepsilon-$Hermitian forms, as follows.  
 We extend first the involution $\ast$ in $A$ to the  full matrix  ring $M(m,A)$ of all  $m \times m$ matrices with coefficients in  $A$, putting $T^{\ast}=( t_{ji}^{\ast})_{ 1 \leq i, j  \leq m}$ for any   
    $T=(t_{ij}))_{ 1 \leq i, j  \leq m}   \in M(m,A).  $                                                                                                                                                                                                                                                                                                                                                                                                                                                                                                                                                                                                                                                                                                                                  
Now, a basis   $ {\mathcal B} =  \{ e_{1},...,e_{m} \}  $   of  the free $A-$module  $M$  having been chosen, 
we define the matrix $[H]$ of   $H$ with respect to $\mathcal B$ 
   by $[H]=(H(e_{i},e_{j}))_{ 1 \leq i, j  \leq m}$.

Then $H(u,v)=u[H]v^\ast \; \; \; \;    ( u, v \in M) $ and 
 conversely, given a matrix $T$ such that $T^{\ast
}=\varepsilon T$ we recover an $\varepsilon-$Hermitian form $H_T$  by $H_{T}
(u,v)=uTv^\ast \;\;\;  (u, v  \in  M).$  Notice that above we have written just  $H_\varepsilon$ instead of 
$H_{J_\varepsilon}$ for the associated $\varepsilon-$hermitian form to the matrix  $J_\varepsilon.$


\begin{proposition}
The group $SL_{\ast}^{\varepsilon}(2,A)$  may be  defined equivalently as the set of all automorphisms $g$ of the  $A-$module $M = A \times A$ such that  $ H_\varepsilon \circ(g\times g)=H_\varepsilon$ or 
  in matrix form as
$$    
SL_{\ast}^{\varepsilon}(2,A)=\{T\in M(2,A) \ | \ TJ_\varepsilon T^{\ast}=J_\varepsilon\}. $$
 \end{proposition}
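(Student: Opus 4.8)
The plan is to show that the three descriptions coincide by turning the intrinsic relations (1)--(5) into the compact matrix identity and back, the bulk of the work being a bookkeeping of entries together with one genuinely structural point about invertibility. First I would record the two facts about $J_\varepsilon$ that drive everything: since $\varepsilon=\pm1$ is central and fixed by $\ast$ (because $1^\ast=1$), one computes $J_\varepsilon^2=\varepsilon I$, so $J_\varepsilon$ is invertible with $J_\varepsilon^{-1}=\varepsilon J_\varepsilon$, and moreover $J_\varepsilon^\ast=\varepsilon J_\varepsilon$, confirming that $J_\varepsilon$ is itself $\varepsilon$-Hermitian and hence a legitimate form matrix. Using the matrix description $H_\varepsilon(u,v)=uJ_\varepsilon v^\ast$ with $g$ acting by $u\mapsto uT$, the invariance $H_\varepsilon(gu,gv)=H_\varepsilon(u,v)$ for all $u,v$ reads $uTJ_\varepsilon T^\ast v^\ast=uJ_\varepsilon v^\ast$; evaluating on the standard basis (where $e_i^\ast=e_i$) and invoking nondegeneracy collapses this to the single identity $TJ_\varepsilon T^\ast=J_\varepsilon$, which settles the equivalence of the unitary characterization with the matrix form, modulo the invertibility issue treated below.

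Next I would expand the matrix identity entrywise. Writing $T=\SmallMatrix{a&b\\c&d}$ and $T^\ast=\SmallMatrix{a^\ast&c^\ast\\b^\ast&d^\ast}$ and using centrality of $\varepsilon$, the four entries of $TJ_\varepsilon T^\ast=J_\varepsilon$ read $ab^\ast+\varepsilon ba^\ast=0$, $cd^\ast+\varepsilon dc^\ast=0$, $ad^\ast+\varepsilon bc^\ast=1$ and $\varepsilon da^\ast+cb^\ast=\varepsilon$; the first three are exactly (1), (2) and the first half of (5), while the fourth is the image of the first half of (5) under $\ast$ up to the central factor $\varepsilon$ (using $\varepsilon^\ast=\varepsilon$ and $\varepsilon^2=1$), so it carries no new content. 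The relations (3), (4) and the second half of (5) arise in precisely the same manner from the companion identity $T^\ast J_\varepsilon T=J_\varepsilon$, whose $(2,1)$ entry is again redundant with its $(1,2)$ entry. Thus the defining list (1)--(5) is equivalent to the pair $\{\,TJ_\varepsilon T^\ast=J_\varepsilon,\ T^\ast J_\varepsilon T=J_\varepsilon\,\}$, and the inclusion of the defining set into the matrix set is immediate since the first equation already follows from (1), (2) and (5).

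The step I expect to be the real obstacle is reducing this pair of identities to the single equation displayed in the statement. Here the facts about $J_\varepsilon$ pay off: from $TJ_\varepsilon T^\ast=J_\varepsilon$ the matrix $R:=\varepsilon J_\varepsilon T^\ast J_\varepsilon$ satisfies $TR=\varepsilon(TJ_\varepsilon T^\ast)J_\varepsilon=\varepsilon J_\varepsilon^2=I$, exhibiting a right inverse, and a direct computation shows that $RT=I$ is equivalent to the companion identity $T^\ast J_\varepsilon T=J_\varepsilon$. Consequently, for an automorphism $g$ (where $T$ is invertible a priori) the two identities are automatically equivalent, so the unitary characterization matches the defining list verbatim. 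To justify the bare equation $TJ_\varepsilon T^\ast=J_\varepsilon$ without assuming invertibility, as in the displayed matrix form, I would argue that the exhibited one-sided inverse $R$ is in fact two-sided, i.e.\ that $M(2,A)$ is Dedekind-finite for the rings under consideration; this is the one place where the structural hypotheses on $(A,\ast)$ and the earlier results of \cite{PSAjalg} genuinely enter, the remainder being the routine verification of the two inclusions already assembled above.
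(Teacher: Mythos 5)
The paper offers no proof of this proposition --- it is stated and closed with a box, with the details deferred to \cite{PSAjalg} --- so there is no argument of the authors' to compare against; judged on its own, your proof is correct and complete. The entrywise identification checks out: $TJ_\varepsilon T^\ast=J_\varepsilon$ unpacks exactly to (1), (2) and the first half of (5) (with the $(2,1)$ entry redundant via $\ast$ and centrality of $\varepsilon$), and $T^\ast J_\varepsilon T=J_\varepsilon$ to (3), (4) and the second half of (5), so the defining list is precisely the pair of matrix identities. You have also correctly isolated the one genuinely nontrivial point, which the paper's terse statement glosses over: the displayed matrix form quotes only $TJ_\varepsilon T^\ast=J_\varepsilon$, which by itself yields only the right inverse $R=\varepsilon J_\varepsilon T^\ast J_\varepsilon$, and the companion identity (equivalently $RT=I$) is needed to recover (3), (4) and the second half of (5). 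Your appeal to Dedekind-finiteness of $M(2,A)$ is the right way to close this gap, and it is automatic in the settings the paper actually works in ($A$ finite from Section 4 onward, artinian in \cite{PSAjalg}); for the unitary-group characterization the issue does not arise since an automorphism is invertible a priori, as you note. The only cosmetic remark is that invoking nondegeneracy when evaluating $uTJ_\varepsilon T^\ast v^\ast=uJ_\varepsilon v^\ast$ on basis vectors is unnecessary --- plugging in $u=e_i$, $v=e_j$ reads off the $(i,j)$ entry directly --- but this does not affect the argument.
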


\begin{flushright}
$\square$
\end{flushright}
\bigskip

\section { Bruhat presentations of $SL_{\ast}^{\varepsilon}(2,A)$}
Given a unitary ring $A$ with an involution $*$, we will write $A^{sym}$ to denote the set of   all $\varepsilon-$symmetric  elements in $A$, i. e., elements $a\in A$ such that $a^*=-\varepsilon a$. We set\\
 
 $h_{t}=\left(
\begin{array}
[c]{cc}%
t & 0\\
0 & t^{\ast-1}%
\end{array}
\right)  $ ($t\in A^{\times}$), $w=w_{\varepsilon}=\left(
\begin{array}
[c]{cc}%
0 & 1\\
\varepsilon & 0
\end{array}
\right)  $ and $u_{s}=\left(
\begin{array}
[c]{cc}%
1 & s\\
0 & 1
\end{array}
\right)  $ ($s\in A^{sym}$)

\bigskip

\begin{definition} 

We will say that $G=SL_{\ast}^{\varepsilon}(2,A)$ has a Bruhat presentation (see \cite{PSAcom, lucho}) if
it is generated by the above elements with  defining relations
\begin{enumerate}
\item $\,h_{t}h_{t^{\prime}}=h_{tt^{\prime}}$,   $\,u_{b}u_{b^{\prime}}=u_{b+b^{\prime
}};$
\item $w^{2}=h_{\varepsilon}$;

\item  $h_{t}u_{b}=u_{tbt^{\ast}}h_{t}$;
\item $w h_{t}=h_{{t^\ast}^{-1}}w$;
\item $w u_{t^{-1}}w u_{-\varepsilon t}w u_{t^{-1}}
=h_{-\varepsilon t}$, with $t$ an invertible  $\varepsilon- $symmetric  element in $A$.
\end{enumerate}
\end{definition}

 \begin{remark}
 The relations above are called ``universal" in loc. cit. because they hold for every possible involutive base ring $(A, \ast).$
\end{remark}

\section{ A generalized Weil representation for  $G=SL_{\ast}^{\varepsilon}(2,A).$ }
In this section $A$ denotes  a ring $A$ with an involution $\ast$, i.e. an
antiautomorphism of  $A$ of order 2. In what follows we will assume that the ring $A$ is finite and also that the  group  $G=SL_{\ast}^{\varepsilon}(2,A)$ has a Bruhat presentation.

\subsection{Data for constructing a Weil representation of   $G=SL_{\ast}^{\varepsilon}(2,A)$}
\label{data}
We will construct a generalized Weil representation for $G$, associated to the following data:
\begin{enumerate}
\item  A finite right $A$-module $M$.

\item A bi-additive function $\chi:M\times M\rightarrow\mathbb{C}^{\times}$ and a character  $\alpha\in\widehat{A^{\times}}$ 
such that:

\begin{enumerate}
\item $ \chi(xt, y) = \alpha(tt^{\ast})\chi(x, yt^\ast) $ for $x,y\in M$ and $t\in
A^{\times}$   ($\chi$ is  $\alpha$-balanced).
\item $\chi(y,x)=[\chi(x,y)]^{-\varepsilon}$.
We observe that $[\chi(x,y)]^{-\varepsilon}=\chi(-\varepsilon x,y)$ \;\;\; ($\chi$ is  $\varepsilon$-symmetric).
\item $\chi(x,y)=1$ for any $x\in M$, implies $y=0$\;\;\; ($\chi$ is non-degenerate).
\end{enumerate}

\item  A function  $\gamma:A^{sym}\times M\rightarrow\mathbb{C}^{\times}$
such that:
\begin{enumerate}
\item $\gamma(b+b^{\prime},x)=\gamma(b,x)\gamma(b^{\prime},x)$, for all $b, b'\in A^{sym}$ and $x\in M$.
\item $\gamma(b,xt)=\gamma(tbt^{\ast},x)$ or equivalently $\gamma(b,x)=\gamma
(tbt^{\ast},xt^{-1})$, for all  $b\in A^{sym}$, $t\in A^{\times}$ and  $x\in M$.
\item  $\gamma(t,x+z)=\gamma(t,x)\gamma(t,z)\chi(x,zt)$ for all   $x,z \in M, t\in A^{sym}.$
 where $t$ is  $\varepsilon-$ symmetric invertible in $A$ and  $c\in\mathbb{C}^{\times}$ satisfies $c^{2}\left\vert M\right\vert =\alpha(\varepsilon)$.
\end{enumerate}

\item 
  We assume moreover that these data are related by the equation:
\begin{equation}
c\gamma(-\varepsilon t,x)\underset{y\in M}{\sum}\chi(-\varepsilon
x,y)\gamma(t^{-1},y)=\alpha(-t).
\end{equation}
\end{enumerate}

\begin{lemma}
 If $t$ is an $\varepsilon-$symmetric invertible element in $A$, then $\gamma(-\varepsilon t,x)=\gamma(t^{-1},xt)$.
\end{lemma}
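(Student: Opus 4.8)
The plan is to obtain the identity from a single application of the covariance property~3(b) of $\gamma$, once I have checked that the right-hand side is even well-defined. Since the first argument of $\gamma$ is required to lie in $A^{sym}$, I would first confirm that $t^{-1}$ is $\varepsilon$-symmetric whenever $t$ is. This is a one-line computation: from $t^{\ast}=-\varepsilon t$ I get $(t^{-1})^{\ast}=(t^{\ast})^{-1}=(-\varepsilon t)^{-1}=(-\varepsilon)^{-1}t^{-1}$, and because $\varepsilon=\pm1$ forces $(-\varepsilon)^{-1}=-\varepsilon$, I conclude $(t^{-1})^{\ast}=-\varepsilon t^{-1}$. Hence $t^{-1}\in A^{sym}$ and the expression $\gamma(t^{-1},xt)$ makes sense.

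With this in hand, I would apply property~3(b) in the form $\gamma(b,xs)=\gamma(sbs^{\ast},x)$, choosing $b=t^{-1}$ and $s=t$ (legitimate since $t$ is invertible). This gives $\gamma(t^{-1},xt)=\gamma(t\,t^{-1}t^{\ast},x)=\gamma(t^{\ast},x)$, where I have merely used $t\,t^{-1}=1$. Finally, the $\varepsilon$-symmetry of $t$ yields $t^{\ast}=-\varepsilon t$, so $\gamma(t^{\ast},x)=\gamma(-\varepsilon t,x)$, which is precisely the asserted equality read from right to left.

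The only step that requires any care is the well-definedness check, namely verifying $t^{-1}\in A^{sym}$; everything else is a direct substitution into an already-established property of the data. In particular, I do not expect to need properties~3(a) or~3(c), nor the $\alpha$-balancing relation for $\chi$, for this particular statement, so the main ``obstacle'' here is purely the bookkeeping that keeps the arguments of $\gamma$ inside their legal domain.
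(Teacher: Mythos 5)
Your proposal is correct and follows essentially the same route as the paper: both apply property 3(b) with $b=t^{-1}$ and multiplier $t$ to get $\gamma(t^{-1},xt)=\gamma(tt^{-1}t^{\ast},x)=\gamma(t^{\ast},x)=\gamma(-\varepsilon t,x)$. Your additional check that $t^{-1}\in A^{sym}$ (so that $\gamma(t^{-1},\cdot)$ is defined) is a small piece of diligence the paper leaves implicit, and it is carried out correctly.
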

\begin{proof} Since $t$ is $\varepsilon$-symmetric we have $t^*=-\varepsilon t$. Hence using the second condition  on $\gamma$ we get $\gamma(t^{-1},xt)=\gamma(tt^{-1}t^*,x)=\gamma(-\varepsilon t,x)$ and our lemma follows.
\end{proof}
\begin{proposition}

We consider as above $\chi$, $\gamma$, $\alpha$  and $c$ satisfying $c^2\vert M\vert=\alpha(\varepsilon)$. 
The following two relations are equivalent:
\begin{enumerate}
\item \ $c\gamma(-\varepsilon t,x)\underset{y\in M}{\sum}\chi(-\varepsilon
x,y)\gamma(t^{-1},y)=\alpha(-t)$.
\item $\underset{y\in M}{\sum}\gamma(t,y)=\frac{\alpha(\varepsilon t)}{c}$.
\end{enumerate}
\end{proposition}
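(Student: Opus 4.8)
The plan is to reduce the character sum on the left of (1) to the bare sum $\sum_{y\in M}\gamma(t^{-1},y)$ and then to reconcile the two multiplicative constants. Throughout I abbreviate $S(s)=\sum_{y\in M}\gamma(s,y)$ for $s\in A^{sym}$, so that (2) is just the assertion $c\,S(t)=\alpha(\varepsilon t)$. The first move is to pull the $y$-independent factor $\gamma(-\varepsilon t,x)$ inside the sum and rewrite it, via the Lemma, as $\gamma(t^{-1},xt)$; at the same time I replace $\chi(-\varepsilon x,y)$ by $\chi(y,x)$ using the $\varepsilon$-symmetry (2b). The left side of (1) then reads $c\sum_{y\in M}\chi(y,x)\,\gamma(t^{-1},xt)\,\gamma(t^{-1},y)$.

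Next I would apply the addition law (3c) with parameter $t^{-1}$ (which is again $\varepsilon$-symmetric invertible, since $(t^{-1})^{\ast}=-\varepsilon t^{-1}$) to the two $\gamma$-factors, obtaining $\gamma(t^{-1},xt)\gamma(t^{-1},y)=\gamma(t^{-1},xt+y)\,\chi(xt,yt^{-1})^{-1}$. The decisive computation is the cross term $\chi(xt,yt^{-1})$: the $\alpha$-balanced property (2a) gives $\chi(xt,yt^{-1})=\alpha(tt^{\ast})\chi(x,yt^{-1}t^{\ast})=\alpha(tt^{\ast})\chi(x,-\varepsilon y)=\alpha(tt^{\ast})\chi(y,x)$, where I used $t^{-1}t^{\ast}=-\varepsilon$ (as $t^{\ast}=-\varepsilon t$) and once more (2b). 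Moreover the balanced property evaluated at $x=0$ forces $\alpha(tt^{\ast})=1$ for every $t\in A^{\times}$, because $\chi(0,\cdot)=1$; hence $\chi(xt,yt^{-1})=\chi(y,x)$. Substituting, the weight $\chi(y,x)$ cancels exactly against $\chi(xt,yt^{-1})^{-1}$, and the left side of (1) collapses to $c\sum_{y\in M}\gamma(t^{-1},xt+y)$. Since $y\mapsto xt+y$ permutes the finite module $M$, this equals $c\,S(t^{-1})$, independent of $x$. Thus (1) is equivalent to $c\,S(t^{-1})=\alpha(-t)$.

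Finally I would match the constants. Replacing $t$ by $t^{-1}$ (as $t$ ranges over all $\varepsilon$-symmetric invertibles, so does $t^{-1}$) turns this into $c\,S(t)=\alpha(-t^{-1})$, so it only remains to identify $\alpha(-t^{-1})$ with $\alpha(\varepsilon t)$. Their quotient is $\alpha(-t^{-1})\,\alpha(\varepsilon t)^{-1}=\alpha(-\varepsilon t^{-2})=\alpha\big(t^{-1}(t^{-1})^{\ast}\big)=1$, again by the normalization $\alpha(ss^{\ast})=1$. Hence $c\,S(t)=\alpha(\varepsilon t)$, that is $\sum_{y\in M}\gamma(t,y)=\alpha(\varepsilon t)/c$, which is exactly (2); and since every step above is reversible, (1) and (2) are equivalent.

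The step I expect to be the main obstacle is the cancellation in the second paragraph: one must recognize that the cross term produced by the quadratic addition law for $\gamma$ is precisely the inverse of the weight $\chi(-\varepsilon x,y)=\chi(y,x)$ already sitting in the sum, so that after the translation $y\mapsto xt+y$ both the $x$-dependence and all $\chi$-factors disappear. This rests on the exact compatibility of the $\alpha$-balanced condition with the $\varepsilon$-symmetry, together with the normalization $\alpha(tt^{\ast})=1$; tracking where the surviving value of $\alpha$ lands is what converts the constant $\alpha(-t)$ attached to $S(t^{-1})$ into the constant $\alpha(\varepsilon t)$ attached to $S(t)$.
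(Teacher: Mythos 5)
Your proof is correct and follows essentially the same route as the paper's: rewrite $\gamma(-\varepsilon t,x)=\gamma(t^{-1},xt)$ via the Lemma, fold the two $\gamma$-factors together with the addition law so that the cross term cancels the weight $\chi(-\varepsilon x,y)=\chi(y,x)$, translate $y\mapsto y+xt$, and match constants. The only differences are cosmetic: the paper applies the addition law in the order that produces $\chi(y,xt\cdot t^{-1})=\chi(y,x)$ directly (so it never needs the observation $\alpha(tt^{\ast})=1$), while you obtain $\chi(xt,yt^{-1})$ and reduce it using the $\alpha$-balanced condition; on the other hand you spell out the final identification $\alpha(-t)=\alpha(\varepsilon t^{-1})$, which the paper leaves as ``from which the result follows.''
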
  
\begin{proof}
The relation 
$c\gamma(-\varepsilon t,x)\underset{y\in M}{\sum}\chi(-\varepsilon
x,y)\gamma(t^{-1},y)=\alpha(-t)$ is equivalent to

$\underset{y\in M}{\sum}\gamma(t^{-1},y)\gamma(-\varepsilon t,x)\chi
(-\varepsilon x,y)=\frac{\alpha(-t)}{c}.$

Now, given that $t^{\ast}=-\varepsilon t$ and that $\gamma(-\varepsilon
t,x)=\gamma(t^{-1},xt),$ we get that the last above relation is equivalent to

$\underset{y\in M}{\sum}\gamma(t^{-1},y)\gamma(t^{-1},xt)\chi(-\varepsilon
x,y)=\frac{\alpha(-t)}{c}$. Using the properties of $\gamma$ this last
expresion is the same as

$\underset{y\in M}{\sum}\frac{\gamma(t^{-1},y+xt)}{\chi(y,x)}\chi(-\varepsilon
x,y)=\frac{\alpha(-t)}{c}.$

But given that $\frac{\chi(-\varepsilon x,y)}{\chi(y,x)}=1,$ we are reduced to

$\underset{y\in M}{\sum}\gamma(t^{-1},y+xt)=\frac{\alpha(-t)}{c},$
\noindent
from which the result follows.

\end{proof}

\subsection{Construction of the generalized Weil representation of  $G$.}
In the theorem below we keep the notations and hypotheses introduced in section \ref{data}.

\begin{theorem}
\label{gwrbasis}

There is a representation $( \mathbb{C}^{M},\rho)$ of $G$ such that 

$\rho_{u_{b}}(e_{x})=\gamma(b,x)e_{x}$

$\rho_{h_{t}}(e_{x})=\alpha(t)e_{xt^{-1}}$

$\rho_{w}(e_{x})=c\underset{y\in M}{\sum}\chi(-\varepsilon x,y)e_{y}$

\noindent
for  $x \in M, b \in A^\times \cap A^\times , t \in A^\times,$   where      $   e_x  $ denotes Dirac delta function at  $x \in M$  given by   $e_x(y) = 1$ if   $y = x;  \;\; e_x(y) = 0$ otherwise. 

This representation is called the generalized Weil representation of  $SL^{\varepsilon}_*(2,A)$ associated to the data 
$(M, \alpha, \gamma, \chi).$
\end{theorem}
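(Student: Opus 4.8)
The plan is to exploit the Bruhat presentation of $G$ directly: by the universal property of a presentation, an assignment of invertible operators to the generators $h_t$, $w$, $u_s$ extends to a representation $\rho$ of $G$ precisely when these operators satisfy the five defining relations of the presentation. So I would first check that the three prescribed operators on $\mathbb{C}^M$ are well defined and invertible: $\rho_{h_t}$ is a nonzero scalar times the permutation $e_x \mapsto e_{xt^{-1}}$ induced by the invertible right action of $t$; $\rho_{u_b}$ is diagonal with entries $\gamma(b,x)\in\mathbb{C}^\times$, hence invertible; and $\rho_w$ is invertible because $\chi$ is non-degenerate, its matrix $(\chi(-\varepsilon x,y))_{x,y}$ being invertible. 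The whole theorem then reduces to verifying relations (1)--(5) at the level of operators.

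Relations (1), (3) and (4) I expect to be short direct computations. For (1), $\rho_{h_t}\rho_{h_{t'}}=\rho_{h_{tt'}}$ follows from $\alpha$ being a character and $(tt')^{-1}=t'^{-1}t^{-1}$, while $\rho_{u_b}\rho_{u_{b'}}=\rho_{u_{b+b'}}$ is exactly the additivity property (3a) of $\gamma$. For (3), applying both sides to $e_x$ reduces the identity $\rho_{h_t}\rho_{u_b}=\rho_{u_{tbt^*}}\rho_{h_t}$ to the covariance relation $\gamma(b,x)=\gamma(tbt^*,xt^{-1})$, which is property (3b). For (4), applying $\rho_w\rho_{h_t}$ and $\rho_{h_{t^{*-1}}}\rho_w$ to $e_x$ and re-indexing the sum, the required coefficient identity becomes $\alpha(t)\chi(-\varepsilon xt^{-1},y)=\alpha(t^{*-1})\chi(-\varepsilon x,yt^{*-1})$, which I would obtain from the $\alpha$-balanced property (2a) together with multiplicativity of $\alpha$.

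Relation (2), $w^2=h_\varepsilon$, is the first place an orthogonality argument is needed. Expanding $\rho_w^2(e_x)$ produces the kernel $\sum_{y\in M}\chi(-\varepsilon x,y)\chi(-\varepsilon y,z)$. Using the $\varepsilon$-symmetry (2b) in the form $\chi(-\varepsilon u,y)=\chi(y,u)$ together with bi-additivity, this kernel collapses to $\sum_{y\in M}\chi(z-\varepsilon x,y)$, which by non-degeneracy (2c) equals $|M|$ when $z=\varepsilon x$ and vanishes otherwise. Hence $\rho_w^2(e_x)=c^2|M|\,e_{\varepsilon x}=\alpha(\varepsilon)e_{x\varepsilon^{-1}}=\rho_{h_\varepsilon}(e_x)$, using the normalization $c^2|M|=\alpha(\varepsilon)$ and centrality of $\varepsilon$.

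The hard part will be relation (5), $w u_{t^{-1}} w u_{-\varepsilon t} w u_{t^{-1}}=h_{-\varepsilon t}$, which is the analogue of the long braid relation and is precisely what the coupling equation (4.1) and the preceding Proposition and Lemma are designed to handle. I would apply the left-hand side to $e_x$ step by step, obtaining after the second $w$ a double sum with inner kernel $\sum_{y\in M}\chi(-\varepsilon x,y)\gamma(-\varepsilon t,y)\chi(-\varepsilon y,z)$, and aim to show this collapses, forcing $z=x(-\varepsilon t)^{-1}$ with total scalar $\alpha(-\varepsilon t)$, so that the composite equals $\rho_{h_{-\varepsilon t}}(e_x)$. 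To evaluate this Gauss-type sum I would use the Lemma to rewrite $\gamma(-\varepsilon t,y)=\gamma(t^{-1},yt)$, shift the summation variable, and then invoke equation (4.1) in its reformulated shape $\sum_{y\in M}\gamma(t,y)=\alpha(\varepsilon t)/c$ from the Proposition to resolve the remaining sum, with the normalization $c^2|M|=\alpha(\varepsilon)$ again fixing the constant. This is the computationally delicate step, since it intertwines all three defining properties of $\gamma$ with the balanced and symmetry properties of $\chi$; the rest of the argument is bookkeeping.
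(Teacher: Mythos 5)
Your proposal is correct and follows essentially the same route as the paper: verify the five Bruhat relations on the delta basis $\{e_x\}$, using orthogonality of the characters $y\mapsto\chi(v,y)$ together with $c^2|M|=\alpha(\varepsilon)$ for $w^2=h_\varepsilon$, and the Lemma $\gamma(-\varepsilon t,y)=\gamma(t^{-1},yt)$ plus the coupling equation (4.1) for the long relation. The only cosmetic difference is that the paper checks relation (5) in the rearranged form $w\,u_{t^{-1}}\,w\,u_{-\varepsilon t}=h_{-\varepsilon t}\,u_{-t^{-1}}\,w\,h_{\varepsilon}$ rather than expanding the full six-factor word.
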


\begin{proof}

It is enough to verify that $\rho_{u_{b}},\rho_{h_{t}},\rho_{w}$ defined as
above, satisfy the relations corresponding to the universal relations in the Bruhat 
presentation of $G.$

To this end, we observe that
\begin{enumerate}
\item[(i)]
($\rho_{h_{t}}\circ\rho_{h_{t^{\prime}}})(e_{x})=\rho_{h_{t}}(\alpha
(t^{\prime})e_{xt^{\prime-1}})=\alpha(t^{\prime})\alpha(t)e_{xt^{\prime
-1}t^{-1}}=\alpha(t^{\prime})\alpha(t)e_{x(tt^{\prime})^{-1}}$

$=\alpha(tt^{\prime})e_{x(tt^{\prime})^{-1}}=\rho_{h_{tt^{\prime}}}(e_{x})$

\item[(ii)]

$(\rho_{u_{b}}\circ\rho_{u_{b^{\prime}}})(e_{x})=\gamma(b^{\prime}%
,x)\gamma(b,x)e_{x}=\gamma(b+b^{\prime},x)e_{x}=(\rho_{u_{b+b^{\prime}}%
})(e_{x})$

\item[(iii)]

$(\rho_{w}\circ\rho_{w})(e_{x})=c^{2}\underset{z\in M}{\sum}\underset{y\in
M}{\sum}\chi(-\varepsilon x,y)\chi(-\varepsilon y,z)e_{z}=\alpha
(\varepsilon)e_{x\varepsilon}=\rho_{h_{\varepsilon}}e_{x}$

\item[(iv)]

$(\rho_{h_{t}}\circ\rho_{u_{b}})(e_{x})=\gamma(b,x)\alpha(t)e_{xt^{-1}}%
=\alpha(t)\gamma(tbt^{\ast},xt^{-1})e_{xt^{-1}}=(\rho_{tbt^{\ast}}\circ
\rho_{h_{t}})(e_{x})$

\item[(v)]

$\rho_{w}\circ\rho_{h_{t}})(e_{x})=c\alpha(t)\underset{y}{\sum}\chi
(-\varepsilon xt^{-1},y)e_{y}=c\underset{y}{\sum}\chi(-\varepsilon
x,yt^{\ast^{-1}})\alpha(t^{\ast^{-1}})e_{y}=$\newline$\rho_{h_{t^{\ast^{-1}}}%
}(c\underset{y}{\sum}\chi(-\varepsilon x,y)e_{y})=(\rho_{h_{t^{\ast^{-1}}}%
}\circ\rho_{w})(e_{x})$

\item[(vi)]  Finally, a computation shows that

($\rho_{w}\rho_{u_{t^{-1}}}\rho_{w}\rho_{u_{-\varepsilon t}})(e_{x}%
)=\underset{z}{\sum}c^{2}\gamma(-\varepsilon t,x)\left(  \underset{y}{\sum
}\chi(-\varepsilon x,y)\gamma(t^{-1},y)\chi(-\varepsilon y,z)\right)  e_{z}$

$(\rho_{h_{-\varepsilon t}}\rho_{u_{-t^{-1}}}\rho_{w}\rho_{h_{\varepsilon}%
})(e_{x})=c\alpha(-t)\underset{z}{\sum}\chi(-\varepsilon x,-\varepsilon
zt)\gamma(-t^{-1},-\varepsilon zt)e_{z}.$

So we want to prove that

$c\gamma(-\varepsilon t,x)\underset{y}{\sum}\chi(-\varepsilon x,y)\gamma
(t^{-1},y)\chi(-\varepsilon y,z)=\alpha(-t)\chi(-\varepsilon x,-\varepsilon
zt)\gamma(-t^{-1},-\varepsilon zt)$

But by hypothesis

$c\gamma(-\varepsilon t,x-\varepsilon z)\underset{y\in M}{\sum}\chi
(-\varepsilon(x-\varepsilon z),y)\gamma(-\varepsilon t,yt^{-1})=\alpha(-t),$

so

$c\gamma(-\varepsilon t,x)\underset{y}{\sum}\chi(-\varepsilon x,y)\gamma
(t^{-1},y)\chi(-\varepsilon y,z)=c\gamma(-\varepsilon t,x)\underset{y}{\sum
}\chi(-\varepsilon x+z,y)\gamma(-\varepsilon t,yt^{-1})=$

$c\gamma(-\varepsilon t,x)\frac{\alpha(-t)}{c\gamma(-\varepsilon
t,x-\varepsilon z)}=\frac{\gamma(-\varepsilon t,x)\alpha(-t)}{\gamma
(-\varepsilon t,x)\gamma(-\varepsilon t,-\varepsilon z)\chi(x,zt)}%
=\alpha(-t)\chi(-\varepsilon x,-\varepsilon zt)\gamma(-t^{-1},-\varepsilon
zt),$
\end{enumerate}
from which the result.
\end{proof}

Notice finally that theorem \ref{gwrbasis} may be reworded in more functional analytic  terms as follows.

\begin{theorem}
\label{gwrfunct}
Let $M$   be a finite $A$-right module. Denote $L^2(M)$ the vector space of all complex-valued functions on $M$, endowed with the usual $L^2$ inner product. Set
\begin{enumerate}
	\item $\rho(h_t)(f)(x) = \bar \alpha(t)f(xt)$,  $f\in L^2(M)$ and $t\in A^{\times}$, $x\in M$.
	\item $\rho(u_b))(f)(x) = \gamma(b,x)f(x)$,  $f\in L^2(M)$ and  $b\in A^{sym}$, $x\in M$.
	\item $\rho(w)(f)(x) = c\displaystyle\sum_{y\in M}\chi(-\varepsilon x,y)f(y)$,  $f\in L^2(M)$ and $x\in M$.
\end{enumerate}
(where $\bar \alpha $ denotes the complex conjugate of the character $\alpha$).

These formulas define a unitary linear representation $(L^2(M), \rho)$ of $SL^{\varepsilon}_*(2,A)$, called the generalized Weil representation of  $SL^{\varepsilon}_*(2,A)$ associated to the data 
$(M, \alpha, \gamma, \chi).$
\end{theorem}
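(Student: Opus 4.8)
The plan is to show that the three operators $\rho(h_t),\rho(u_b),\rho(w)$ are unitary on $L^2(M)$ and that they satisfy the universal relations of the Bruhat presentation of $G$; since those relations are a complete set of defining relations, $\rho$ will then extend to a unitary representation of $G$. Throughout I would use that, because $A$ and $M$ are finite, every character occurring in the data is unimodular: $\alpha\in\widehat{A^{\times}}$ takes values in roots of unity, and for fixed $x,y$ the additive characters $\gamma(\,\cdot\,,x)$ on $A^{sym}$ and $\chi(\,\cdot\,,y)$ on $M$ do as well. In particular $|\alpha(t)|=|\gamma(b,x)|=|\chi(x,y)|=1$ and $\bar\alpha=\alpha^{-1}$, a fact used repeatedly to reconcile scalar factors with those appearing in Theorem~\ref{gwrbasis}.

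Unitarity of the generators is the genuinely new content, and I would treat it first. The operator $\rho(u_b)$ is multiplication by the unimodular function $x\mapsto\gamma(b,x)$, hence unitary; $\rho(h_t)$ is the scalar $\bar\alpha(t)$ times the permutation $f\mapsto f(\,\cdot\, t)$ induced by the bijection $x\mapsto xt$ of $M$, hence also unitary. For $\rho(w)$ I would expand $\|\rho(w)f\|^2$, use bi-additivity of $\chi$ and $|\chi|=1$ to collapse the cross terms to the character sum $\sum_{x}\chi(-\varepsilon x,y-y')$, and then apply orthogonality of characters together with the non-degeneracy of $\chi$ to obtain $|M|$ when $y=y'$ and $0$ otherwise. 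This yields $\|\rho(w)f\|^2=|c|^2|M|\,\|f\|^2$, which equals $\|f\|^2$ precisely because $|c|^2|M|=|\alpha(\varepsilon)|=1$, by the normalisation $c^2|M|=\alpha(\varepsilon)$.

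Next I would verify the relations, following the pattern of the proof of Theorem~\ref{gwrbasis} but keeping track of the present conventions. The relations $h_th_{t'}=h_{tt'}$ and $u_bu_{b'}=u_{b+b'}$ reduce to multiplicativity of $\alpha$ and additivity of $\gamma$; the relation $h_tu_b=u_{tbt^{\ast}}h_t$ is exactly the covariance $\gamma(b,xt)=\gamma(tbt^{\ast},x)$; and $wh_t=h_{(t^{\ast})^{-1}}w$ follows from the $\alpha$-balancedness of $\chi$ once the scalar factors $\alpha$ and $\bar\alpha$ are matched using $\bar\alpha=\alpha^{-1}$. The relation $w^2=h_\varepsilon$ comes from the same orthogonality computation as in the unitarity step, giving $\rho(w)^2 f(x)=\alpha(\varepsilon)f(\varepsilon x)$, and this coincides with $\rho(h_\varepsilon)$ because $\varepsilon^2=1$ forces $\alpha(\varepsilon)=\bar\alpha(\varepsilon)=\pm1$.

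The heart of the proof, and the step I expect to be the main obstacle, is the braid-type relation $wu_{t^{-1}}wu_{-\varepsilon t}wu_{t^{-1}}=h_{-\varepsilon t}$ for invertible $\varepsilon$-symmetric $t$. Here I would mirror step (vi) of the proof of Theorem~\ref{gwrbasis}: evaluate both sides on a function, expand the resulting triple sum over $M$, and simplify using the quadratic identity $\gamma(t,x+z)=\gamma(t,x)\gamma(t,z)\chi(x,zt)$ for $\gamma$ together with the cancellation $\chi(-\varepsilon x,y)=\chi(y,x)$ coming from the $\varepsilon$-symmetry of $\chi$. The computation collapses precisely when the compatibility equation $c\gamma(-\varepsilon t,x)\sum_{y\in M}\chi(-\varepsilon x,y)\gamma(t^{-1},y)=\alpha(-t)$ is invoked, equivalently in the form $\sum_{y\in M}\gamma(t,y)=\alpha(\varepsilon t)/c$ supplied by the Proposition above. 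The real difficulty is the careful bookkeeping of the three Gauss-type sums and applying the compatibility relation at the correct stage; everything else is formal. With all relations verified on unitary generators, $\rho$ extends to the asserted unitary representation of $SL^{\varepsilon}_{\ast}(2,A)$.
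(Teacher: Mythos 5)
Your proposal is correct and takes essentially the same route as the paper: the paper's own proof of Theorem~\ref{gwrfunct} just notes that the Bruhat relations were already verified on the Dirac delta basis in Theorem~\ref{gwrbasis} and that unitarity ``is easily checked.'' Your explicit unitarity argument (unimodular multiplier for $\rho(u_b)$, scalar times permutation for $\rho(h_t)$, and the character-orthogonality computation giving $|c|^2|M|=1$ for $\rho(w)$), together with your care over the $\alpha$ versus $\bar\alpha$ convention, supplies exactly the detail the paper leaves to the reader.
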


\begin{proof} It is enough to verify that the linear operators $\rho_{u_{b}},\rho_{h_{t}},\rho_{w},$ defined as
above, preserve   the universal relations for the generators 
  of $G = SL^{\varepsilon}_*(2,A)$.  This has been done however in the previous theorem, by evaluating on the canonical Dirac delta function basis  $\{  e_x    \}_{x \in  M}. $     of   $L^2(M).$ On the other hand, the unitarity  of the given operators is easily checked.
  
  \end{proof}
    \section{Example 1: The full matrix ring case.}


In this section we take  $A$ to be the full matrix ring $M_n(k),$ where   $k$ denotes the finite field $\mathbb F_q$  with $q$ elements,  endowed with the transpose involution  $\ast$. 

Recall that we have proved in \cite{PSAcom} that  the group $SL^\varepsilon_\ast(2,A)$ has   a Bruhat presentation when $q>3$.

To satisfy the conditions in Theorem \ref{gwrfunct} we may take (keeping the notations introduced there) 
 $M$ to be the $k-$vector space  $Hom_k(E_0, E)$ where  $E_0 = k^n$ and $E$ is a $k-$vector space of   dimension   $m$ endowed with a 
  non-degenerate $k-$quadratic form  $Q_0$  with associated $k-$bilinear form $B_0,$  defined by    $$B_0(u,v) = Q_0(u+v) - Q_0(u) - Q_0(v)$$ for all   $u,v \in E.$  In what follows we will identify 
  $ M = Hom_k(E_0, E)$ with  $ \bigoplus_{1 \leq i \leq  n} E_i, $where  $ E_i = E_0$ for all $i. $
  
  This induces canonically a non degenerate  $A-$valued quadratic form $\mathbb Q$ on $M$ given by
 
\begin{enumerate}
\item  $\mathbb Q(x)_{ii} = Q_0(x_i)   $
\item    $ \mathbb Q(x)_{ij} = B_0(x_i, x_j) $     
\item $  \mathbb Q(x)_{ji} = 0$
\end{enumerate}
for  all $x = (x_1, \dots , x_n) \in  M,  1 \leq i,j \leq n.$

If we pass now to the quotient modulo ``anti-traces", i.e. if we define   
     $\overline { \mathbb Q } = pr \circ \mathbb Q:  M \rightarrow  \bar A, $  
     where  $pr$ denotes the canonical projection of $A$ onto  $A / A^0 $, with  $ A^0 = \{ a - a ^\ast |  a \in  A  \}, $ then we see that 
  $(M, \overline { \mathbb Q } ) $ is a quadratic  module over the involutive ring  $A$ in the sense of Tits \cite{T}.

We fix moreover a non trivial character  $\psi $ of  $k^+$ and we denote by  $tr$ the usual matrix trace from  $A$ onto $k$. Then  $\underline \psi = \psi \circ  tr $ is a non trivial character of  $A^+$ such that   
$   \underline \psi (ab ) = \underline \psi (ba) $  and  $   \underline \psi (a^\ast ) = \underline \psi (a) $  for all  $ a,b \in A.$ On the other hand, we have =
$$   \underline \psi \circ  \mathbb Q  =  \psi \circ  Q  $$
where   $Q$ denotes the  $k-$valued  non-degenerate quadratic form   $  tr \circ \mathbb Q $ over $k,$  whose associated $k-$bilinear form will be denote by $B.$ Notice that    $rank  \;Q = nm$  and in fact
$$  Q(x) = \sum_{1 \leq i \leq n} Q_0(x_i) $$
for  all $x \in  M.$

In what follows we assume  that  $\varepsilon =  -1 $.
We put then for  $s \in A^{sym},  x, y  \in M: $ 

\begin{enumerate}
\item $\gamma(s,x)  =  \underline \psi (s \overline {\mathbb Q} (x) )     $  
\item  $ \chi(x,y) = \underline \psi ( \overline {\mathbb B} (x,y) )   $.   
\end{enumerate}
 
From our  results in  3.3.2  of \cite{sa1} it follows easily that the normalized quadratic Gauss sum  

$$ S_ {\underline\psi, \mathbb Q} (s ) =  \frac{1}{|M|^{\frac{1}{2}}} \sum_{x \in M} \underline \psi (s \mathbb Q (x))   =   S_ {\underline\psi, s \mathbb Q} (1 ) $$    

\noindent
associated to the   matrix valued quadratic form   $\mathbb Q$, defined  for any $s \in    A^{sym} $, 
is constant on the  orbits of the natural action of the multiplicative group   $A^\times$   in  the set  $ A^\times \cap A ^{sym} $, given by  
$ s \mapsto    asa^\ast  \;\;\; (s \in  A^\times \cap A ^{sym},  \; a \in  A^\times).$    
Recall however that there are only two   $A^\times-$orbits in   $A^\times \cap A ^{sym},$  that correspond to the two isomorphy types of non-degenerate  $k-$quadratic forms  of rank $n$, that may be represented by  the unit matrix   $\bf  1 $ in  $A$ and   diagonal matrix $ d_0 =  
diag(1, \dots, 1, t_0$),  for    a fixed non square $t_0 \in A$.

Notice now that  the values of  $ S_{\underline \psi,  \mathbb Q} $   at  $s =  \bf 1  $ and $ s = a_0 $ either coincide or differ by a sign. Indeed, since quadratic Gauss sums associated to an orthogonal sum of   quadratic forms are just the product  of the corresponding single Gauss sums, it may be readily checked that  

$$ \frac{S_ {\underline\psi, \mathbb Q} (d_0 )}{ S_ {\underline\psi, \mathbb Q} (1 )   } =
 \frac{S_ { \psi,  t_0Q_0 } (1 )  }{ S_ { \psi,   Q_0} (1)}  $$ 
 
 \noindent   
so that our statement follows from  the corresponding property for  the classical rank $1$ Gauss sum     $  S_ { \psi,   Q_0}$, for  which is well known or  readily verified. 
Recalling that  $tQ' \simeq Q'$ for any non degenerate  $k-$quadratic form  $Q'$ of even rank, we see then    that 
$$  \frac{S_ { \psi,  Q_0 } (s)   }{ S_ { \psi,   Q_0} (1)} = \alpha(s) $$
 where    $\alpha(s) = 1 $ for $s$ in the orbit of $\bf 1 $ and  $\alpha(s) = (-1)^m $ for  $s$ in the orbit of $d_0,$ i.e.
 \begin{equation}  \label{alpha}
 \alpha(s) = (sgn (det(s)))^m
 \end{equation}
 for all $s \in  A^\times \cap A^{sym}$.  
 Notice that $\alpha^2 = \bf 1$ and $ \alpha= \alpha^{-1} = \bar \alpha $ .
 We put then   
 \begin{equation} \label{c}
  c = \frac{ \alpha(-1)} {   S_ { \psi,   Q   } } 
  \end{equation}
    Recall that we have   $tQ' \simeq Q'$ for any non degenerate  quadratic form  $Q'$ of even rank over the finite field  $k$.

We  check now that $\gamma$ satisfies  \ref{sumagauss}. In fact

\begin{eqnarray*}
c\gamma(t,x)\sum_{y\in M}\chi(x,y)\gamma(t^{-1},y)&=&c\sum_{y\in M}\underline{\psi}(tQ(x)+B(x,y)+t^{-1}Q(y)) \\
\end{eqnarray*}
Setting $x=t^{-1}x'$   we get  $tQ(x)+B(x,y)+t^{-1}Q(y)= t^{-1}Q(x'+y)$. 

Then
\begin{eqnarray*}
c\gamma(t,x)\sum_{y\in M}\chi(x,y)\gamma(t^{-1},y)&=&c\sum_{y\in M}\underline{\psi}(t^{-1}Q(x'+y)\\
&=&cS_{\underline{\psi}\circ Q}(t^{-1})\\
&=&\alpha(-t^{-1})\\
& = &\alpha(-t).
\end{eqnarray*}
as desired.

Summing up,  after having checked the required properties of our data,  taking  the character  $\alpha$ to be given by  \ref{alpha}, for all $s \in A^\times$ and then $c$ to be given by  \ref{c}(see \cite{sa1}, Theorem \ref{gwrfunct}   gives then a  (true) Weil representation  for  $SL^\varepsilon_\ast(2,A) =  Sp(2n,k)$ for any non degenerate $k-$quadratic space $(E_0, Q_0)$ and any choice of the non trivial additive character  $\psi$ of  $k$.  In this way we recover the representation constructed in  \cite{sa1} for even  $m$, besides extending it to the odd $m$ case.

\section{ Example 2: The truncated polynomial ring case}

 We give now an example of application of theorem \ref{gwrfunct} in the case  of a   non semi-simple involutive ring  $(A,*)$ with non-trivial nilpotent Jacobson radical.  
 
Explicitly, we let $k=\mathbb{F}_q$  be the finite field  with $q$ elements, $q$ odd, $m$  a positive integer. We set

\[
 A = A_{m}=k[x]/\left\langle x^{m}\right\rangle = \left\{
{\displaystyle\sum\limits_{i=0}^{m-1}}
a_{i}x^{i}:a_{i}\in k,x^{m}=0\right\}
\]
\noindent 
and we denote by $*$  the $k$-linear involution  on $A_m$ given by $x\mapsto -x$.\\

 We will study here the group   $SL_*^\ast (2,A_m)$  for   $\varepsilon  =  -1 $, which will be denoted  simply $SL_*(2,A_m)$.  
It is known that this group   has a Bruhat presentation \cite{lucho}.
 
 To this end  let us consider the non-degenerate quadratic  $A$-module $(M, Q,B)$,  such that   $M =  A_m, \;\; Q : A_m  \rightarrow A_m $ is given by  $Q(t) = a^*a$ and $B : A_m \times A_m \rightarrow A_m$ is given by $B(a,b) = a^*b+ab^*$.   
 
  Then we have,  for  all    $a,b, t \in A_m: $   
  
  \begin{enumerate}
  
  \item[(i)] 
  $ Q(at)=t^*Q(a)t$; 
   \item[(ii)] 
   
   $Q(a + b)=B(a, b)+ Q(a)+Q(b)$; 
   
    \item[(iii)] $B(at, b)=B(a, bt^*)$; 
    
     \item[(iv)] $B(a,b)=B(b,a)^*$ 
     
      \item[(vi)]  $B(at,b)=t^{\ast}B(a,b)$.
 
 \end{enumerate}

We denote by  $tr$   the linear form on $A_m$  defined by $tr  \left(\sum_{i=0}^{m-1}a_{i}x^{i}\right) = a_{m-1}$.
 
 Then the  form $tr$ is $k$-linear and invariant under the \mbox{involution $\ast$}, i.e., $tr(a^*) = tr(a)$, for any $a\in A_m$. Moreover the $k$-form $tr \circ B$ is a non-degenerate symmetric bilinear form on $A_m$.
 
We fix  be a non-trivial character  $\psi$  of $k^+$ and  we set $\underline{\psi} = \psi \circ tr.$ We assume from now on that  $m$ be is odd. 

The Gauss sum $S_{\underline \psi \circ Q}$ associated to the character  $\underline{\psi} $ and the non-degenerate quadratic  $A_m$-module $(A_m, Q,B)$   is defined by 
\[
S_{\underline \psi  \circ Q}(a) = \sum_{x\in A_m}\underline \psi (aQ(x))\,.
\]
It is known \cite{lucho} that  the function  $\alpha$  from $A_m^{\times}\cap A_m^{sym}$ to $\mathbb{C}^{\times}$ given by  $\  \alpha(a) = \frac{S_{\underline{\psi}\circ Q}(a)}{S_{\underline{\psi}\circ Q}(1)}$ is the sign character of the  group $A_m^{\times}\cap A_m^{sym}$.

Furthermore, we have
 
 \begin{equation}
  \label{alpha1} 
   \alpha(tt^*)=1    \ \ \ \ \ \ \ \ \ \ \ \ \ \ \ \   ( t \in A^\times _m)
 \end{equation} 


   \begin{equation}
   \label{gauss2}
   (S_{\underline{\psi}\circ Q}(1))^2=\alpha(-1)\vert 
A_m\vert
     \end{equation}

\noindent  
  where $\vert A_m\vert$ is the cardinality of the ring $A_m$. 

Let us define the function  $\chi$ from $A_m\times A_m$ to $\mathbb{C}$ as $\chi(a,b)=(\underline{\psi}\circ B)(a,b)$, so     $\chi$ is a symmetric non degenerate  biadditive form  on   $A_m$   such that   $\chi(at,b)=\chi(a,bt^*)$  for all   $a, b, t \in  A_m.$    


\begin{enumerate}
\item $\chi(at,b)=\alpha(tt^*)\chi(a,bt^*)$ for $a, b\in A_m$ and $t\in A_m^{\times}$. \label{c_1}
\item $\chi(b,b)=\chi(-a,b)$ for $a, b\in A_m$, \label{c_2}
\item $\chi(a,b)=1$ for any $a\in A_m$ implies $b=0$.\label{c_3}
\end{enumerate}

It follows  from this and relations  \ref{alpha1}  and  \ref{gauss2} that for the finite  $A_m-$module   $M = A_m$ conditions 1a), 1b) and 1c) of subsection \ref{data} hold.

  We define  now the function  $\gamma$  from $A_m^{sym}\times A_m$ to $\mathbb{C}$ as $\gamma(b,x)=\underline{\psi}(bQ(x))\;\;\;\; (b \in A^sym,  x \in  A_m)$ and we set $c=\frac{\alpha(-1)}{S_{\underline{\psi}\circ Q}(1)}$. 
  Notice that    
  $  c^2\vert A_m\vert=\alpha(-1).   $ Clearly the function  $\gamma$ is   additive in the first variable, satisfies
  
  $$    \gamma(b,xt)=\gamma(tbt^*, x)$$
   for $t\in A_m^{\times}, x \in A_m, b\in A_m^{sym}$ \label{cc_2}
   and relates to    $\chi$ through
  
  \begin{equation}  
  \label{cc_3}
  \gamma(b,x+y)=\gamma(b,x)\gamma(b,y)\chi(x, yb) 
\end{equation}

\noindent
for any $x, y\in A_m, b\in A_m^{sym}$. 

Finally 
 
\begin{equation}
\label{sumagauss}
c\gamma(t,a)\sum_{d\in A_m}\chi(a,d)\gamma(t^{-1},d)=\alpha(-t),
\end{equation}
for any symmetric element $t\in A_m^{\times}$, where $c\in\mathbb{C}^{\times}$ satisfies 

\begin{equation} \label{cc_4}
c^2\vert A_m\vert=\alpha(-1).   
\end{equation}

The verification that  $\gamma$ satisfies  \ref{sumagauss} is completely analogous to the one in the preceding example.

Summing up, the above setup provides  a Weil representation of  
$SL_\ast^\varepsilon (2,A_m) $ in $L^2(M)$,     according to theorem \ref{gwrfunct} (which is exactly the one constructed in \cite{lucho}).

\section{A first decomposition of the Weil representation} 
 We give here a first decomposition of the Weil representation$ (L^2(M), \rho)$ of $G$, taking advantage of the fact that there is  a group of intertwinning operators that acts naturally in $L^2(M),$ to wit, the ``unitary group"  $U(\gamma,\chi)$ of the pair $(\gamma, \chi).$

\begin{definition}  
We denote by $U(\gamma,\chi)$ the group of all $A$-linear automorphisms $\varphi$ of $M$   such that

\begin{enumerate}
\item  $\gamma(b,\varphi(x))=\gamma(b,x)$ for any $b\in A^{sym}$, $x\in M$;

\item   $\chi(\varphi(x),\varphi(y))=\chi(x, y)$ for any $x, y\in M$.
\end{enumerate} 
\end{definition}

\begin{remark}
 
Condition  (2) for   $\varphi$ in $U(\gamma,\chi)$ is only necessary in the rather peculiar case where there are no $\varepsilon-$symmetric invertible elements    in $A,$ because  if  $t \in A^\times \cap A^{sym}, $ then we   have, for all  $x,y \in M,$  
\begin{equation}  \label{gamadachi}
\chi(x,y)=\frac{\gamma(t,x+yt^{-1})}{\gamma(t,x)\gamma(t,yt^{-1})}=\frac{\gamma(t,\varphi(x)+\varphi(y)t^{-1})}{(\gamma(t,\varphi(x))\gamma(t,\varphi(y)t^{-1})}=\chi(\varphi(x),\varphi(y)).
\end{equation}
The next lemma addresses the converse question. 

\end{remark}
 
\begin{lemma}
 
We can recover the function $\gamma $ from   $\chi$ as follows.

\begin{equation}
\label{chidagama}
  \gamma( t, x) =    \chi (x2^{-1}t^\ast,  x)          \;\;\;\;\;\;\;\;\;\;\;\;\  ( t \in A^\times \cap A^{sym}, x \in M)          
\end{equation}
 \end{lemma}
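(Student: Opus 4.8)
The plan is to recover $\gamma(t,\cdot)$ from $\chi$ by playing the cocycle-type relation $\gamma(t,x+z)=\gamma(t,x)\gamma(t,z)\chi(x,zt)$ off against the covariance $\gamma(b,xt)=\gamma(tbt^\ast,x)$ and the additivity of $\gamma$ in its first slot. The one genuine subtlety is that these relations pin down $\gamma(t,x)^2$ at once, but not $\gamma(t,x)$ itself; I circumvent the resulting sign ambiguity by \emph{halving} the argument rather than doubling it. Throughout, $t\in A^\times\cap A^{sym}$ and $x\in M$ are fixed, and I use that $2=1+1$ is central and $\ast$-invariant, so that $2^{-1}$ (which the statement presupposes to lie in $A$) is central with $(2^{-1})^\ast=2^{-1}$.

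First I would record the squaring identity $\gamma(t,y)^2=\chi(y,yt)$, valid for every $y\in M$. Writing $2y=y+y=y\cdot 2$, the cocycle relation with $z=y$ gives $\gamma(t,2y)=\gamma(t,y)^2\chi(y,yt)$, while the covariance relation together with $2t2^\ast=4t$ and additivity give $\gamma(t,2y)=\gamma(4t,y)=\gamma(t,y)^4$; comparing the two and cancelling the nonzero factor $\gamma(t,y)^2$ yields the identity. Now I apply this with $y=u:=x2^{-1}$, so that $u+u=x$. The cocycle relation gives $\gamma(t,x)=\gamma(t,u)^2\chi(u,ut)$, and substituting $\gamma(t,u)^2=\chi(u,ut)$ turns this into $\gamma(t,x)=\chi(u,ut)^2$. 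Biadditivity of $\chi$ in its first slot then gives $\chi(u,ut)^2=\chi(u+u,ut)=\chi(x,x2^{-1}t)$, so I obtain the clean formula $\gamma(t,x)=\chi(x,x2^{-1}t)$, now free of any sign ambiguity.

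It remains to match this with the asserted expression $\chi(x2^{-1}t^\ast,x)$. By the balancing relation applied to the multiplier $\tau:=2^{-1}t^\ast$, one has $\chi(x\tau,x)=\alpha(\tau\tau^\ast)\,\chi(x,x\tau^\ast)=\alpha(\tau\tau^\ast)\,\chi(x,x2^{-1}t)$, since $\tau^\ast=2^{-1}t$. Hence the two expressions agree exactly when $\alpha(\tau\tau^\ast)=1$, so the last step is to verify $\alpha(\sigma\sigma^\ast)=1$ for every $\sigma\in A^\times$. I would deduce this from the axioms as follows: for an invertible symmetric $s$ and any $\sigma\in A^\times$, the element $\sigma s\sigma^\ast$ is again invertible symmetric, so the squaring identity applied at $s$ (evaluated at $x\sigma$) and at $\sigma s\sigma^\ast$, combined with $\gamma(s,x\sigma)=\gamma(\sigma s\sigma^\ast,x)$, forces $\chi(x\sigma,x\sigma s)=\chi(x,x\sigma s\sigma^\ast)$; comparing against $\chi(x\sigma,x\sigma s)=\alpha(\sigma\sigma^\ast)\chi(x,x\sigma s\sigma^\ast)$ and cancelling the nonvanishing $\chi$-value gives $\alpha(\sigma\sigma^\ast)=1$. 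Such an $s$ exists because $t$ itself is invertible symmetric (and if none existed the lemma would be vacuous). Taking $\sigma=\tau$ closes the argument.

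The main obstacle is precisely the one flagged above: the elementary relations naturally produce $\gamma$ only up to sign, and the halving device $u=x2^{-1}$ is what delivers the exact value. The auxiliary fact $\alpha(\sigma\sigma^\ast)=1$ is the only other nonformal input, and it too follows from the squaring identity; everything else is bookkeeping with biadditivity and the covariance of $\gamma$.
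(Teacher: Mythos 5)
Your proof is correct, and its engine is the same as the paper's: evaluate $\gamma$ on a doubled argument in two ways (cocycle relation versus covariance plus additivity in the first slot) and cancel. The difference is in how you land on the stated formula. The paper keeps a free scaling parameter, computing $\gamma(r,xs+xs)$ both ways to get $\gamma(2srs^\ast,x)=\chi(xs,xsr)$, and then specializes $sr=1$, which drops the answer directly in the form $\chi(xs,x)$ with the multiplier in the \emph{left} slot; setting $t=2s^\ast$ finishes it. You instead specialize to $s=1$ first (your squaring identity $\gamma(t,y)^2=\chi(y,yt)$), halve the argument, and arrive at $\chi(x,x2^{-1}t)$ with the multiplier in the \emph{right} slot, so you must pay an extra toll: the identity $\alpha(\sigma\sigma^\ast)=1$ needed to move the multiplier across via the $\alpha$-balancing axiom. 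You do discharge that debt correctly — your derivation of $\alpha(\sigma\sigma^\ast)=1$ from the squaring identity and the covariance $\gamma(s,x\sigma)=\gamma(\sigma s\sigma^\ast,x)$ is sound, and it is a fact the paper never states in general (though it is visible in both worked examples). So your route is slightly longer but yields a genuinely useful by-product; the paper's choice of specialization is the shortcut that makes the auxiliary fact unnecessary.
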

\begin{proof}
 To prove relation  \ref{chidagama} we calculate  $\gamma(r, xs + xs), $ for  $x\in M,  r,s \in A^\times \cap A^{sym}$,  in two different ways:
 
  $\gamma(r, xs + xs)  = \gamma(2srs^\ast, x)\chi(xs,xsr), $

  $\gamma(r, xs + xs)  = \gamma (2sr2s^\ast, x) = \gamma (4srs^\ast, x).$
  
  Therefore
  $$ \gamma(2srs^\ast,x) = \chi(xs,xsr) $$
  and choosing  $r,s \in   A^\times \cap A^{sym}  $ such that  $sr = 1$ we get 
  $$  \gamma(2s^\ast, x) = \chi(xs, x) $$
  Putting  $2s^\ast = t,$ we finally get 
  $ \gamma(t,x) = \chi(x2^{-1}t^\ast,  x).   $

 \end{proof}
\begin{proposition}
The group $ \Gamma = U(\gamma,\chi)$ acts naturally on $       L^2(M)       $ by $(\varphi.f)(x)=f(\varphi^{-1}(m))$. This action commutes with the Weil representation $\rho$ of $G = SL^{\varepsilon}_*(2,A)$
\end{proposition}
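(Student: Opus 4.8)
The plan is to reduce the commutation claim to the three generators of $\rho$ and then check each separately. Writing $\Pi_\varphi$ for the operator $(\Pi_\varphi f)(x)=f(\varphi^{-1}(x))$, the identity $(\varphi\psi)^{-1}=\psi^{-1}\varphi^{-1}$ gives $\Pi_{\varphi\psi}=\Pi_\varphi\Pi_\psi$, so the formula indeed defines a left action of $\Gamma=U(\gamma,\chi)$ on $L^2(M)$. Since $\Gamma$ is a group, conditions (1) and (2) in the definition of $U(\gamma,\chi)$ hold for $\varphi^{-1}$ as well as for $\varphi$; I will use this repeatedly. By Theorem \ref{gwrfunct} the operators $\rho(h_t)$, $\rho(u_b)$, $\rho(w)$ generate $\rho$, so it suffices to show that $\Pi_\varphi$ commutes with each of them for every $\varphi\in\Gamma$.

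The two easy cases involve only formal manipulation. For $\rho(h_t)$, evaluating at $x$ gives $(\Pi_\varphi\rho(h_t)f)(x)=\bar\alpha(t)f(\varphi^{-1}(x)t)$ and $(\rho(h_t)\Pi_\varphi f)(x)=\bar\alpha(t)f(\varphi^{-1}(xt))$, which agree because $\varphi^{-1}$ is $A$-linear on the right module $M$, so $\varphi^{-1}(xt)=\varphi^{-1}(x)t$. For $\rho(u_b)$, the two sides at $x$ are $\gamma(b,\varphi^{-1}(x))f(\varphi^{-1}(x))$ and $\gamma(b,x)f(\varphi^{-1}(x))$, which agree because condition (1) applied to $\varphi^{-1}$ yields $\gamma(b,\varphi^{-1}(x))=\gamma(b,x)$.

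The genuinely nontrivial case is $\rho(w)$. Here $(\Pi_\varphi\rho(w)f)(x)=c\sum_{y\in M}\chi(-\varepsilon\varphi^{-1}(x),y)f(y)$, while $(\rho(w)\Pi_\varphi f)(x)=c\sum_{y\in M}\chi(-\varepsilon x,y)f(\varphi^{-1}(y))$. In the latter I reindex the sum by $y=\varphi(z)$, which is legitimate since $\varphi$ is a bijection of the finite set $M$, obtaining $c\sum_{z\in M}\chi(-\varepsilon x,\varphi(z))f(z)$. The two expressions then coincide once I establish $\chi(-\varepsilon\varphi^{-1}(x),z)=\chi(-\varepsilon x,\varphi(z))$; since $-\varepsilon=\pm1$ and $\varphi^{-1}$ is additive, the left side equals $\chi(\varphi^{-1}(-\varepsilon x),\varphi^{-1}(\varphi(z)))$, which is $\chi(-\varepsilon x,\varphi(z))$ by condition (2) applied to $\varphi^{-1}$. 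This step -- the reindexing together with the correct handling of the $-\varepsilon$ factor and the invocation of condition (2) -- is the only real obstacle; everything else is bookkeeping. I note finally that, by the Remark preceding equation \eqref{gamadachi}, whenever $A$ has an invertible $\varepsilon$-symmetric element condition (2) is already a consequence of condition (1), so in that generic situation commutation with $\rho(w)$ follows from $\gamma$-invariance alone.
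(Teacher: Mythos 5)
Your proof is correct and follows the same route the paper takes: the paper's proof simply asserts that, using the explicit formulas for the Weil operators on the generators together with the $A$-linearity of the elements of $\Gamma$, one readily checks the commutation, and your argument is exactly that check carried out in detail (including the reindexing $y=\varphi(z)$ in the $\rho(w)$ case). Your closing observation about condition (2) being redundant when $A^\times\cap A^{sym}\neq\emptyset$ also matches the paper's own remark preceding the proposition.
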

 
\begin{proof}     Using the explicit definition in terms of $\chi$ and $\gamma,$ of the Weil operators $\rho(g)$ for our generators  $g$ of $G$, besides the fact that  all $\varphi \in \Gamma$ are   $A-$linear,  one readily checks that the natural action of  $\Gamma$ commutes with them.  
\end{proof}

\begin{definition}
Let $(\pi,V)$ be an irreducible representation of $\Gamma = U(\gamma,\chi)$. Denote by $(L^2_V(M)[\pi], \rho)$ the representation of $G$ in the space  $L^2_V(M)[\pi]$ consisting  of all $V-$valued functions  $f $ on $M$ such that $f(\varphi(x))=\pi(\varphi))(f(x)),$ whose action is the action  of the Weil representation $\rho$ of $SL^{\varepsilon}_*(2,A)$.
\end{definition}
Now the well known description of isotypical components for the action of one group that intertwines with the action of another group in the same space  (see \cite{sa1}, for instance) becomes:
\begin{proposition}
The Weil     $ (L^2(M), \rho)$ of $G$       decomposes as the direct sum of the isotypical components of the natural representation of   $\Gamma$    in   $L^2(M)$ as follows:

      $$    W  =  \bigoplus _{\pi \in  \widehat \Gamma }  \pi^\vee \otimes L^2_V(M)[\pi], $$
 \noindent
where $ \pi^\vee $ denotes the contragredient of the ireducible  representation $\pi \in  $ of  $  \widehat \Gamma.$

\end{proposition}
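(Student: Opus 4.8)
The plan is to invoke the standard double-commutant description of isotypical components for two commuting group actions on a finite dimensional space, and then to match the resulting multiplicity spaces with the concrete spaces $L^2_V(M)[\pi]$. First I would observe that $\Gamma = U(\gamma,\chi)$ is a \emph{finite} group, being a subgroup of the automorphism group of the finite module $M$; hence the $\Gamma$-module $L^2(M)$ is completely reducible (Maschke) and admits a canonical isotypical decomposition
\begin{equation*}
L^2(M) \;\cong\; \bigoplus_{\sigma \in \widehat\Gamma} V_\sigma \otimes m_\sigma, \qquad m_\sigma = \mathrm{Hom}_\Gamma\bigl(V_\sigma, L^2(M)\bigr),
\end{equation*}
where $\Gamma$ acts only on the first tensor factors. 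By the preceding proposition the Weil representation $\rho$ of $G$ commutes with the natural $\Gamma$-action, so each multiplicity space $m_\sigma$ inherits a $G$-action and the displayed isomorphism is one of $\Gamma \times G$-modules, with $G$ acting trivially on the $V_\sigma$ factor.

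The key step is to identify the abstract multiplicity spaces with the spaces $L^2_V(M)[\pi]$. Writing the $\Gamma$-equivariance out, I would note that for an irreducible $(\pi, V)$ the covariance condition $f(\varphi(x)) = \pi(\varphi)(f(x))$ defining $L^2_V(M)[\pi]$ is exactly the condition for $f$ to be a $\Gamma$-invariant element of $L^2(M) \otimes V$, where $\Gamma$ acts diagonally by the permutation action $(\varphi \cdot g)(x) = g(\varphi^{-1}(x))$ on the first factor and by $\pi$ on the second; that is, $L^2_V(M)[\pi] = \bigl(L^2(M)\otimes V\bigr)^{\Gamma}$. Feeding in the isotypical decomposition of $L^2(M)$ and applying Schur's lemma to each $(V_\sigma \otimes V)^{\Gamma}$, only the summand with $V_\sigma \cong V^{\ast}$ survives and contributes a one-dimensional invariant space, whence $L^2_V(M)[\pi] \cong m_{\pi^\vee}$ as $G$-modules. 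Since $\rho$ commutes with $\Gamma$, it preserves the covariance condition, so this identification is genuinely $G$-equivariant.

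Finally I would reindex: as $\pi$ ranges over $\widehat\Gamma$ so does its contragredient $\pi^\vee$, and substituting $\sigma = \pi^\vee$ together with $m_{\pi^\vee} \cong L^2_V(M)[\pi]$ into the isotypical decomposition yields
\begin{equation*}
L^2(M) \;\cong\; \bigoplus_{\pi \in \widehat\Gamma} \pi^\vee \otimes L^2_V(M)[\pi]
\end{equation*}
as $G$-modules, which is the asserted decomposition.

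The main obstacle I anticipate is bookkeeping of conventions rather than substance: one must track carefully how the contragredient enters, since the permutation action on $L^2(M)$ pairs with $\pi$ on $V$ through the inverse, and this is precisely what turns the surviving summand into $V^{\ast} = \pi^\vee$ instead of $\pi$. One must also make sure that the $G$-action on the multiplicity spaces is exactly the restriction of $\rho$ to $L^2_V(M)[\pi]$, using that the Weil operators $\rho(h_t)$, $\rho(u_b)$ and the summation operator $\rho(w)$ act componentwise on $V$-valued functions and commute with $\Gamma$, so that the covariance condition is stable under $\rho$.
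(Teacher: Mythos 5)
Your argument is correct and is precisely the standard double-commutant/isotypical-component argument that the paper itself invokes without proof (it merely cites the ``well known description of isotypical components'' for two commuting group actions, referring to \cite{sa1}). Your identification $L^2_V(M)[\pi] \cong \bigl(L^2(M)\otimes V\bigr)^{\Gamma} \cong \mathrm{Hom}_\Gamma(V_{\pi^\vee}, L^2(M))$ via Schur's lemma, including the appearance of the contragredient through the inverse in the permutation action, supplies exactly the details the paper leaves implicit.
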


 The irreducibility of the representations $(L^2_V(M)[\pi], \rho)$ of $G$ constructed in this way,  remains an open problem in general (see \cite{sa1} for a complete answer for  $G = Sp(4, \mathbb F_q)$). We expect to address this question in other cases  elsewhere.


\bigskip
\noindent
{\bf Acknowledgments:} We thank P. Cartier for inspiring discussions on this subject, stretching over several decades...

\end{document}